\newcommand{\blue}[1]{#1}
\theoremstyle{plain}
\newtheorem{theorem}{Theorem} 
\newtheorem{lemma}[theorem]{Lemma}     
\theoremstyle{definition}
\newtheorem{example}[theorem]{Example} 
\newtheorem{definition}[theorem]{Definition} 
\theoremstyle{remark}
\newtheorem{remark}[theorem]{Remark}  
\newcommand{\Hom}{\mathrm{Hom}}
\newcommand\Zn{\mathbb{Z}}
\newcommand\Pn{\mathbb{P}}
\newcommand\bB{\mathbb{B}}
\newcommand\calE{\mathcal{E}}
\newcommand\calF{\mathcal{F}}
\newcommand\calG{\mathcal{G}}
\newcommand\calX{\mathcal{X}}
\newcommand\calO{\mathcal{O}}
\newcommand\calJ{\mathcal{J}}
\newcommand\lhom{\mathcal{H}om}
\newcommand{\tens}[1]{%
  \mathbin{\mathop{\otimes}\displaylimits_{#1}}%
}
\title[Splitting  of supervector bundles]{Splitting   of supervector bundles\\[3pt] on projective superspaces}
\author{Charles Almeida}
\address{ICEx - UFMG \\
Department of Mathematics,   Av. Ant\^onio Carlos, 6627\\
30123-970 Belo Horizonte, MG, Brazil}
\email{charlesalmeida@mat.ufmg.br}
\author{Ugo Bruzzo}
\address{SISSA (Scuola Internazionale Superiore di Studi Avanzati) \\
Via Bonomea 265, 34136 Trieste, Italy \\
INFN (Istituto Nazionale di Fisica Nucleare), Sezione di Trieste \\
IGAP (Institute for Geometry and Physics), Trieste}
\email{bruzzo@sissa.it}
\begin{document}

\maketitle

\begin{abstract}
	We provide a splitting criterion for supervector bundles over the projective superspaces $\mathbb{P}^{n|m}$. More precisely, we prove that a rank $p|q$ supervector bundle on $\mathbb{P}^{n|m}$  with  vanishing intermediate cohomology is  isomorphic to the direct sum of even and odd line bundles, provided that $n \geq 2$. For $n=1$ we provide an example of a supervector bundle that cannot be written as a sum of line bundles. 
 	\medskip
   	 
   	 \noindent
   	 \textbf{Keywords:} Projective supergeometry, supervector bundles, split superschemes.
   	 
   	 \medskip
   	 
   	 \noindent
   	 \textbf{2020 Mathematics Subject Classification:} 14M30, 14F06, 14F17 .
\end{abstract}
\section{Introduction}

The mathematical formalization of the study of algebraic supergeometry, extending classical results from algebraic geometry, has opened new directions in exploring mathematical objects arising in physics, particularly in supersymmetric field theories and superstring theory. Supergeometry aims to provide the right environment for studying such structures as moduli space of supervector bundles and moduli space of supersymmetric curves \cite{DW2013,BHR2021}, which are important in the perturbative approach to superstring theory \cite{K2021, M2013}. In this context, understanding the arithmetically Cohen-Macaulay (ACM) bundles is a natural step towards classifying supervector bundles on superschemes. 

ACM bundles are vector bundles on algebraic varieties that have no intermediate cohomology. They are important for the classification of vector bundles, as they provide insights into the structure of the derived category \cite{Kuz2004}. On classical varieties such as projective spaces, Segre varieties,  quadrics, and Grassmannians, ACM bundles were extensively studied, with techniques encompassing Koszul resolutions, derived categories, and the Borel-Weil-Bott theorem (e.g., \cite{AG1998, O89, CMR2005}). However, generalizing these results to the supergeometric setting offers some substantial challenges. Many of the classical tools, such as the Borel-Weil-Bott theorem, Koszul resolutions, or Beilinson-type spectral  either do not exist in the supergeometry context or require substantial modifications, (see  \cite{Noja22, OP84} for the Koszul resolution and \cite{Penkov2022} for the Borel-Weil-Bott theory in the super setting). 

Despite the challenges, extending the notion and the classification of ACM bundles on superschemes is an essential step to progress  toward a more concrete description of supervector bundles. In this paper, we shall focus on the case of ACM bundles on $\Pn^{n|m}$ and shall prove that such bundles split as sums of even and odd line bundles. 

\begin{theorem}
Let $\mathcal{E}$ be a rank $p|q$ supervector bundle on $\Pn^{n|m}$, with $n \geq 2$ and $m \geq 1,$ such that $H^{i}_*(\Pn^{n}, \mathcal{E}) = 0$, for $ 1 \leq i \leq n-1$. Then there exists two sequences of integers $a_1 \geq a_2 \geq \ldots, \geq a_p$, and $b_1 \geq b_2 \geq \ldots, \geq b_q$, such that:

   \[\mathcal{E}  =  \displaystyle \left[ \bigoplus^{p}_i \mathcal{O}_{\Pn^{n|m}}(a_i)\right]\oplus \Pi \left[ \bigoplus^{q}_i\mathcal{O}_{\Pn^{n|m}}(b_i)\right]. \]    
   \end{theorem}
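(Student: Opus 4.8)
The plan is to reduce the statement to the classical Horrocks splitting criterion on $\Pn^n$ and then to lift the resulting splitting across the nilpotent (fermionic) thickening. Write $\pi\colon \Pn^{n|m}\to\Pn^n$ for the projection onto the reduced space and $j\colon \Pn^n\hookrightarrow\Pn^{n|m}$ for the embedding, so that $\pi\circ j=\mathrm{id}$ and $\Pn^{n|m}$ is split, with $\pi_*\calO_{\Pn^{n|m}}\cong\bigwedge^\bullet\calF$ for the odd bundle $\calF=\calO_{\Pn^n}(-1)^{\oplus m}$. Since $\pi$ is affine one has $H^i_*(\Pn^n,\pi_*\calE)=H^i_*(\Pn^{n|m},\calE)$ and $\pi_*(\calE(t))=(\pi_*\calE)(t)$, so the hypothesis says precisely that the honest vector bundle $\pi_*\calE$ on $\Pn^n$ has no intermediate cohomology. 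As $n\geq 2$, Horrocks' criterion gives $\pi_*\calE\cong\bigoplus_\alpha\calO_{\Pn^n}(c_\alpha)$; equivalently the graded module $\mathbf N:=\Gamma_*(\calE)$ is free over $S:=\Gamma_*(\calO_{\Pn^n})=k[x_0,\dots,x_n]$. The whole problem is thus to upgrade this $S$-freeness to freeness over the super coordinate ring $R=S\otimes_k\Lambda$, with $\Lambda=\bigwedge(\theta_1,\dots,\theta_m)$, which is exactly the assertion that $\calE$ splits into even and odd line bundles.

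The core step is to show that the reduced bundle $\bar\calE:=j^*\calE$ splits on $\Pn^n$. Because $\calE$ is locally free over $\calO_{\Pn^{n|m}}$, each fibre of $\mathbf N$ is a free module over the exterior algebra $\Lambda$, and a base-change/Nakayama argument then makes $\mathbf N$ free as a $\Lambda$-module. Now I would exploit that $\Lambda$ is a Frobenius (self-injective) algebra while $S$ is regular: tensoring the Cartan (Koszul-dual) resolution of $k$ over $\Lambda$ with the $\Lambda$-free, $S$-free module $\mathbf N$ yields an $S$-free resolution of the reduction $\bar{\mathbf N}=\mathbf N\otimes_\Lambda k$, and the identity $\Ext^{\bullet}_S(\bar{\mathbf N},S)\cong\Ext^\bullet_\Lambda(k,\mathbf N^{\vee})$ shows that the higher $\Ext$ vanish, since $\mathbf N^\vee$ is again $\Lambda$-free and hence $\Lambda$-injective. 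As $S$ is regular this forces $\bar{\mathbf N}$ to be $S$-free, i.e. $\bar\calE$ is a sum of even and odd line bundles $\bigoplus_i\calO_{\Pn^n}(a_i)\oplus\Pi\bigoplus_i\calO_{\Pn^n}(b_i)$. Conceptually this is the super-analogue of ``maximal Cohen--Macaulay $=$ free'', with the odd directions handled by self-injectivity of $\Lambda$ and the even ones by regularity of $S$.

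Finally I would lift the splitting of $\bar\calE$ from $\Pn^n$ to $\calE$ on $\Pn^{n|m}$. Each generator $\calO_{\Pn^n}(a_i)\to\bar\calE$ is lifted step by step along the $\calJ$-adic filtration $\calE\supset\calJ\calE\supset\cdots$, whose graded pieces are $\bar\calE\otimes\bigwedge^k\calF$, to a map $\calO_{\Pn^{n|m}}(a_i)\to\calE$; the obstruction to each lifting lies in $H^1\big(\Pn^n,(\bar\calE\otimes\bigwedge^k\calF)(-a_i)\big)$, which is a sum of groups $H^1(\Pn^n,\calO_{\Pn^n}(\ell))$. For $n\geq 2$ these vanish by Bott's theorem, so all generators (even and odd) lift and assemble into a morphism $F:=\bigoplus_i\calO_{\Pn^{n|m}}(a_i)\oplus\Pi\bigoplus_i\calO_{\Pn^{n|m}}(b_i)\to\calE$ which is an isomorphism modulo the nilpotent ideal $\calJ$; by Nakayama it is then an isomorphism, and reordering the exponents gives the stated form.

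The main obstacle is the core step together with its lifting: this is where the interaction between the self-injective fermionic fibre and the regular bosonic base must be made precise, and where the hypothesis $n\geq 2$ is genuinely used — the lifting obstruction $H^1(\Pn^n,\calO_{\Pn^n}(\ell))$ is nonzero exactly when $n=1$, which is consistent with the announced $n=1$ counterexample. A careful treatment must also keep track of the difference between the graded modules $\Gamma_*$ and their associated sheaves (finite-length discrepancies supported at the irrelevant ideal), since it is precisely this discrepancy, measured again by the same $H^1$, that obstructs the splitting when $n=1$.
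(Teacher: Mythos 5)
Your overall skeleton is the same as the paper's: split off the bosonic part, apply Horrocks on $\Pn^n$, lift the resulting splitting across the nilpotent ideal with obstructions in an $H^1$, and conclude because a morphism of locally free sheaves that is an isomorphism modulo $\calJ$ is an isomorphism (the paper's Theorem \ref{bosonicequivalence}; your Nakayama step). Your lifting step is sound and essentially equivalent to the paper's: the paper lifts the isomorphism $\calF_{red}\to\calE_{red}$ in one shot, locating the obstruction in $H^1(\Pn^n,\calJ(\calE\otimes\calF^\vee))$, which is a direct summand of $H^1(\Pn^n,\calE\otimes\calF^\vee)=\bigoplus_i H^1(\Pn^n,\calE(-a_i))\oplus\bigoplus_i H^1(\Pn^n,\calE(-b_i))$ and hence vanishes by hypothesis; your step-by-step lifting along the $\calJ$-adic filtration, with obstructions in sums of $H^1(\Pn^n,\calO_{\Pn^n}(\ell))=0$ for $n\ge 2$, works just as well.

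The genuine gap is in your ``core step''. The claim that $\mathbf N=\Gamma_*(\calE)$ is free over $\Lambda$ ``because $\calE$ is locally free'' via ``base-change/Nakayama'' does not hold: local $\Lambda$-freeness of the sheaf does not pass to global sections, since $\Gamma_*$ is only left exact. Concretely, $\operatorname{Tor}_1^{\Lambda}(k,\mathbf N)$ is computed from the failure of exactness of maps like $\Gamma_*(\calE)\xrightarrow{\;\theta\;}\Gamma_*(\calE)$, and that failure is measured by exactly the $H^1_*$-groups your hypothesis controls; with no cohomological input the claim is false, as the paper's own $n=1$ discussion shows (e.g.\ $T_{\Pn^{1|1}}$, or the rank $1|0$ sheaves on $\Pn^{1|m}$ not of the form $\calO(a)$). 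So you must invoke the hypothesis at this point --- and once you do, the entire detour through the Frobenius property of $\Lambda$, the Cartan resolution, and the asserted identity $\Ext^\bullet_S(\bar{\mathbf N},S)\cong\Ext^\bullet_\Lambda(k,\mathbf N^{\vee})$ (which you do not prove, and which is dubious as stated) becomes unnecessary. The split structure you already quoted does the work directly: $p_*\calE\cong\operatorname{Gr}(\calE)\cong\bigoplus_k \calE_{red}\otimes\bigwedge^k\bigl(\calO_{\Pn^n}(-1)^{\oplus m}\bigr)$ as $\calO_{\Pn^n}$-modules, so $\calE_{red}=E_0\oplus\Pi E_1$ is a \emph{direct summand} of $p_*\calE$, inherits the vanishing $H^i_*=0$ for $1\le i\le n-1$, and Horrocks applied separately to $E_0$ and $E_1$ splits $\calE_{red}$ at once --- this one-line observation (the paper's Theorem \ref{reduced=super}) replaces your homological-algebra middle section. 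In short: correct architecture and endgame, but the passage from the hypothesis to the splitting of $\calE_{red}$ rests on an unjustified freeness claim, and the machinery proposed to repair it is both unsubstantiated and avoidable.
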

To do so, we shall heavily rely on the split superscheme structure of $\Pn^{n|m}$ (see Definition \ref{splitdef}), and for this reason, the proof differs substantially from the classical ones. Due to their dependency on the split hypothesis, the methods developed here do not directly extend to more general superschemes like the supergrassmannians. 

In Section 2 we shall develop the techniques necessary to relate the properties of sheaves on a superscheme with their direct images on the underlying bosonic reduction $X$. In Section 3 we shall use these tools to prove the splitting criterion.

{\bf Acknowledgements.} The first author would like to thank for the warm hospitality of SISSA, which provided a most welcoming enviroment for his visit during 2024, when this work was carried out. The first author is partially supported by the CNPq grant 408974/2023-0 and by the CAPES/Print Grant nr.~88887.913038/2023-00. The second author's research is partly supported by   PRIN 2022BTA242 ``Geometry of algebraic structures: moduli, invariants, deformations'' and INdAM-GNSAGA.

\bigskip

\section{Preparatory results}
In this section we recall some important aspects of superalgebraic geometry and present some results that will be important for the rest of our work. A reference for the main definitions in this Section is \cite{biblia}. 

A superalgebra $\mathbb A$ is a $\mathbb{Z}_2$-graded agebra which is graded commutative, that is, $a\cdot b = (-1)^{|a|.|b|}b \cdot a$ for every $a,b \in \mathbb A$, where $|c|$ is the $\mathbb{Z}_2-$ degree of the element $c$.

\paragraph{\bf Superschemes.} We say that $\mathbb{B}$ is a supercommutative ring if, in addition to being $\mathbb{Z}_2$-commutative and associative,  the ideal generated by the odd elements $J \subset \bB$ is finitely generated. 
$B=\mathbb{B}/J$ is the bosonic reduction of $\mathbb{B}$. We also say that $\mathbb{B}$ is split if there exists a finitely generated projective $B$-module $M$ such that $\mathbb{B}= {\bigwedge}_AM$. 

We define the odd dimension of $\mathbb{B}$ as
the smallest number of generators of the ideal $J$.

\begin{definition}
A (locally) ringed superspace is a pair $\calX=(X,\mathcal{O}_{\calX})$, where $X$ is a topological space, and $\mathcal{O}_{\calX}$ is a sheaf of $\mathbb{Z}_2$-graded commutative rings such that for every point $x\in X$ the stalk $\mathcal{O}_{\calX,x}$ is a (local)  superring.\footnote{``Local'' means that the superring has a unique maximal homogeneous ideal.}
\end{definition}

 We note that for the category of $\Zn_2$-graded $\calO_\calX$-modules   on a ringed superspace there is an endofunctor $\Pi$ that reverses the parity of the objects, such that $\Pi^2$ is the identity. 

\begin{definition}
    The superspectrum of a superring $\mathbb{A}$ is 
the locally ringed superspace  {$\mathbb{S}pec(\mathbb{A})=(X,\mathcal{O})$}, where $X$ is the spectrum of the bosonic reduction $A$ of $\mathbb{A}$,
and $\mathcal{O}$ is the sheaf of $\mathbb{Z}_2$-graded commutative superrings defined on the fundamental open sets $D(f)$, $f\in \mathbb A$ by $\mathcal O(D_f) = \mathbb A_f$.
A superscheme is a locally ringed superspace $\calX=(X,\mathcal{O}_{\calX})$ which is locally isomorphic to the superspectrum of a superring. 
\end{definition}

 Given a locally ringed superspace $\calX=(X,\mathcal{O}_{\calX})$, we can consider the homogeneous ideal   {$\mathcal{J}$ generated by the odd elements. Then $\mathcal{O}_X:=\mathcal{O}_{\calX}/\mathcal{J}$ is a purely even sheaf of local rings. We shall say that
\begin{equation}\label{superfund}
     0 \to  \mathcal{J} \to  \mathcal{O}_{\calX} \to \mathcal{O}_X \to 0
 \end{equation}
 \noindent is the fundamental exact sequence. We shall call the locally ringed space $X=(X,\mathcal{O}_X)$ the bosonic reduction of $\calX$.

Next we discuss  the notion of dimension for superschemes.

\begin{definition}
     The odd dimension of a superscheme $\calX$ is the supremum of the odd dimensions of the local superrings $ \mathcal{O}_{\calX,x} $ for all points $ x \in X $. The even dimension of $\calX $ is the dimension of the scheme $X$. Both dimensions may be infinite. The dimension of $ X $ is the pair: 
$$
\dim \calX = (\text{even-dim } \calX|\text{odd-dim } \calX).
$$ 
We say that a morphism $ f: \calX \to \mathcal{S} $ of superschemes has relative dimension $ (n|m) $ if the fibres $ X_s $ are superschemes of dimension $ (n|m)$.
 \end{definition}

  Given a $\Zn_2$-graded $\calO_\calX$-module $\mathcal M$, we associate with it the graded module
 $$ \operatorname{Gr} (\mathcal M) = \bigoplus_{i\ge 0} \operatorname{Gr}_i  (\mathcal M)=\bigoplus_{i\ge 0} \mathcal J^i\mathcal M/\mathcal J^{i+1}\mathcal M$$
 of the filtration
 $$ \mathcal M \supset \calJ \mathcal M \supset \calJ^2 \mathcal M \supset ... $$
 Since $\calJ$ annihilates every sheaf  $ \operatorname{Gr}_i  (\mathcal M)$, the latter have a strucure of $\calO_X$-module. 
 
 $ \operatorname{Gr}_0 (\mathcal M) = \mathcal M/\mathcal J\mathcal M$ will be denoted $\mathcal M_{red}$. Of course,
 $(\calO_\calX)_{red}=\calO_X$. One has $\mathcal M_{red}= \mathcal M\otimes_{\calO_\calX}\calO_X$,
and $\mathcal M_{red}$ is an $\calO_X$-module.}
 Note that, if $\calE$ is a rank $p|q$ locally free sheaf on $\calX$, then $\calE_{red}$ can be written as $E_0+\Pi E_1$, where $E_0, E_1$ are, respectively,  rank $p$ and $q$ vector bundles on $X$, the bosonic reduction of $\calX$. For a coherent sheaf $\calF$ on $\calX$, we define the dual of $\calF$ as the sheaf $\calF^{\vee} = \lhom_{\calO_{\calX}}(\calF,\calO_{\calX}).$

Thanks to \cite[Proposition 2.13]{biblia}, we shall not make any distinction between rank $(p,q)$ locally free sheaves  and rank $(p,q)$ supervector bundles on $\calX$.

The next lemma shows that the operation of taking the reduced part of a sheaf commutes with tensor products, \blue{and with taking duals in the locally free case,} and will be needed in what follows.

\begin{lemma}\label{commutesreduced}
   \blue{ Let $\calF$ and $\calG$ be $\Zn_2$-graded $\calO_\calX$-modules}   on a   ringed superspace $\calX$, and $\calE$ a locally free sheaf on $\calX$. Then the following holds:

    \begin{itemize}
        \item[a)] $\calF_{red} \tens{\calO_X} \calG_{red} =  (\calF \tens{\calO_{\calX}} \calG)_{red}$;\\[-10pt]
        \item[b)] $(\calE^{\vee})_{red} = (\calE_{red})^{\vee}$.
        \end{itemize}
\end{lemma}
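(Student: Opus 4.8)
The plan is to exploit the description $\mathcal M_{red}=\mathcal M\tens{\calO_\calX}\calO_X$ recorded just before the lemma, so that taking the reduced part is nothing but base change along the quotient morphism $\calO_\calX\to\calO_X=\calO_\calX/\calJ$. Once both statements are rephrased in these terms, they become the standard compatibilities of the tensor product and of dualization with base change, and the only extra care needed concerns the $\Zn_2$-grading and its sign conventions.

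For part a), I would use the canonical isomorphism
$$(\calF\tens{\calO_\calX}\calO_X)\tens{\calO_X}(\calG\tens{\calO_\calX}\calO_X)\;\cong\;(\calF\tens{\calO_\calX}\calG)\tens{\calO_\calX}\calO_X,$$
which holds for $\Zn_2$-graded modules exactly as in the ungraded case. It follows from associativity of the tensor product together with the unit isomorphism $\calO_X\tens{\calO_X}(-)\cong(-)$: rewriting the left-hand side as $\calF\tens{\calO_\calX}\bigl(\calO_X\tens{\calO_X}(\calG\tens{\calO_\calX}\calO_X)\bigr)$ and cancelling the inner factor $\calO_X\tens{\calO_X}(-)$ produces the right-hand side. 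All the natural maps involved are parity-preserving, so no sign subtleties arise here.

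For part b), I would first construct the natural morphism
$$(\calE^\vee)_{red}=\lhom_{\calO_\calX}(\calE,\calO_\calX)\tens{\calO_\calX}\calO_X\;\longrightarrow\;\lhom_{\calO_X}(\calE_{red},\calO_X)=(\calE_{red})^\vee,$$
sending $\phi\otimes s$ to the homomorphism $e\otimes 1\mapsto s\cdot\overline{\phi(e)}$, where $\overline{(\,\cdot\,)}$ denotes reduction modulo $\calJ$. To prove this is an isomorphism I would argue locally: where $\calE$ is free of rank $p|q$, all four sheaves appearing above become free $\calO_X$-modules of rank $p|q$, and the natural map is readily checked to be an isomorphism on the dual bases. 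Since being an isomorphism of sheaves can be tested on stalks, the global statement then follows.

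The formal content of part a) is entirely routine. The genuine point, and the place where the hypotheses really enter, is part b): dualization does \emph{not} commute with base change for arbitrary coherent sheaves, so the local freeness of $\calE$ is essential in order to pass to the free local model. The main thing to watch is that the duality pairing and the base-change map are defined with the correct super sign conventions and are even, so that they interact correctly with the parity functor $\Pi$; once this bookkeeping is settled, the local comparison is immediate.
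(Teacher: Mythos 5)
Your proof is correct and follows essentially the same route as the paper: part a) is the identical base-change manipulation via associativity and the unit isomorphism, and in part b) your natural map $(\calE^\vee)_{red}\to(\calE_{red})^\vee$ is exactly the paper's morphism composed with the Tensor--Hom adjunction identification $\lhom_{\calO_{\calX}}(\calE,\calO_X)\cong\lhom_{\calO_X}(\calE_{red},\calO_X)$, verified by the same reduction to the locally free model (the paper checks $\calE=\calO_{\calX}$ and $\calE=\Pi\calO_{\calX}$; your dual-basis check in rank $p|q$ is equivalent). You also correctly isolate local freeness as the essential hypothesis for b), matching the paper's counterexample in its Remark \ref{counter}.
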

\begin{proof}
For part a) note that:
\begin{equation*}
    \calF_{red} \tens{\calO_X} \calG_{red} = \calF \tens{\calO_{\calX}} \calO_X \tens{\calO_X} \calO_X \tens{\calO_{\calX}} \calG = \calF \tens{\calO_{\calX}} \calG \tens{\calO_{\calX}} \calO_X = (\calF \tens{\calO_{\calX}} \calG)_{red}.
\end{equation*}

For part b),  we have 
$$(\calE_{red})^{\vee} = \lhom_{\calO_X}(\calE_{red},\calO_X) = \lhom_{\calO_X}(\calE\tens{\calO_\calX}\calO_X,\calO_X) =   \lhom_{\calO_{\calX}}(\calE, \calO_X)$$
where we have used the Tensor-Hom adjunction. On the other hand we have 
$$ (\calE^\vee)_{red} =  \lhom_{\calO_{\calX}}(\calE, \calO_{\calX}) \tens{\calO_{\calX}} \calO_X $$
so we need to define as isomorphism 
\begin{equation} \label{homred}  \lhom_{\calO_{\calX}}(\calE, \calO_{\calX}) \tens{\calO_{\calX}} \calO_X \to  \lhom_{\calO_{\calX}}(\calE, \calO_X).  \end{equation}
This is defined as
$$\psi\otimes f \mapsto \phi,  \qquad \phi(s) = f \widetilde{\psi(s)}$$
where $s$ is a section of $\calE$ and a tilde denotes the action of the structural morphism $\calO_{\calX} \to \calO_X$.
Since $\calE$ is locally free we only need to show that this is an isomorphism when $\calE=\calO_{\calX}$ and $\calE=\Pi\calO_{\calX}$, which fact is trivial.  Note that the morphism \eqref{homred} is defined even when $\calE$ is not locally free.\end{proof}
A counterexample to part b) when $\calE$ is not locally free will be given in Remark \ref{counter}.

 \smallskip
 \paragraph{\bf Projected and split superschemes.} There is a canonical closed immersion of locally ringed superspaces $i:X \to \calX,$ induced by the epimorphism $\mathcal{O}_{\calX} \to \mathcal{O}_{X}$. We shall say that $\calX$ is projected if there is a morphism of locally ringed superspaces $p:\calX \to X$, such that $p \circ i = Id$, the identity morphism. We shall call $p$ the projection map of $\calX$. The existence of such a map causes the fundamental exact sequence \eqref{superfund} to split, \blue{so that
\begin{equation}\label{Osplit} \calO_\calX = \calO_X \oplus \calJ.
\end{equation}
 as superrings.}

Topologically $p$ is the identity, but the morphism
 $\calO_X\to\calO_\calX$ gives any $\Zn_2$-graded $\calO_\calX$-module a structure of ($\Zn_2$-graded) $\calO_X$-module. To emphasize this, we rewrite equation \eqref{Osplit} as
 \begin{equation}\label{Osplit2} p_\ast \calO_\calX = \calO_X \oplus p_\ast\calJ.
\end{equation}
More generally, if $\calE$ is a locally free  $\Zn_2$-graded $\calO_\calX$-module, by tensoring equation \eqref{Osplit} by $\calE$ be obtain an isomorphism
 \begin{equation}\label{Osplit3} p_\ast \calE = \calE_{red} \oplus p_\ast (\calJ\calE).
\end{equation}

\begin{remark} When $\calX$ is projected, one can define a morphism 
$$   \lhom_{\calO_{\calX}}(\calE, \calO_X) \to \lhom_{\calO_{\calX}}(\calE, \calO_{\calX}) \tens{\calO_{\calX}} \calO_X   $$
which is a right inverse to the morphism \eqref{homred}. Thus when $\calX$ is projected the natural morphism $(\calF^\vee)_{red} \to (\calF_{red})^\vee$ is surjective
for any $\Zn_2$-graded $\calO_\calX$-module $\calF$.
\end{remark}

As noted in \cite[Section 2.8]{biblia}, the cohomology of  a $\mathbb{Z}_2 $-graded $ \mathcal{O}_{\calX}$-module   on a superscheme $ \calX = (X, \mathcal{O}_{\calX}) $ is the same as the cohomology of  $\mathcal{F}$ as a sheaf of abelian groups on $X$. In the case $\calX$ is projected with projection $p\colon\calX\to X$, we have then
\begin{equation}\label{samecohom} H^i(X, \calF) = H^i(X, p_{*}\calF).
\end{equation}
Then  from equation \eqref{Osplit3} we have 
\begin{equation}\label{superreduced}
    H^i(X, \calE) = H^i(X, \calE_{red}) \oplus  H^i(X, p_*(\calJ \calE)).
\end{equation}

\begin{definition}\label{splitdef}
     Let $\calX = (X, \calO_{\calX})$ be a superscheme. 
     We shall say that $\calX$ is split if $\calJ/\calJ^2$ is a locally free finitely generated $\calO_X$-module, and $\calO_\calX = \bigwedge \calJ/\calJ^2$.
    \end{definition} 
Note that in this case $\operatorname{Gr}_i(\mathcal{O}_{\calX})= \bigwedge^i \calE$.
Clearly, split superschemes are projected. When $\calX$ is split equation \eqref{Osplit3} may be strengthened into
$$ p_\ast \calE  = \operatorname{Gr}(\calE)$$
whenever $\calE$ is a locally free  $\Zn_2$-graded $\calO_\calX$-module.

\smallskip
\paragraph{\bf Projective superschemes.}
One defines the projective superspectrum $\mathbb{P}\mathrm{roj}(\bB)$ of a $\Zn$-graded superring $\bB$  by mimicking the construction of the projective spectrum (see \cite[Section 2.3]{biblia} for details). In this work we shall be most interested in the superschemes $\Pn^{n|m}$,  the projective superspectra  of the free polynomial algebra $\mathbb{K}[x_0,\cdots,x_n, \theta_1, \cdots, \theta_m]$, where $\mathbb{K}$ is an algebraically closed field of characteristic 0, $x_i$ are even variables and $\theta_j$ are odd variables. Coherent, torsion-free, reflexive, and locally free sheaves on superschemes are defined in the usual way, and, together with their $\mathbb{Z}_2$-graded morphisms, they form   corresponding categories.

By \cite[Proposition 2.7]{biblia},  $\Pn^{n|m}$ is a split superscheme, with  $\mathcal{O}_{\Pn^{n|m}} = \bigwedge \mathcal{O}_{\mathbb{P}^n}(-1)^{\oplus m}$. Consequently, $\Pn^{n|m}$ is also projected.

For a projective superscheme $\calX = \mathbb{P}\mathrm{roj}(\bB)$ one defines the rank $1|0$ sheaves $\calO_{\calX}(r)$ as the localizations of  the supermodules $\bB[r]$ (See \cite[Definition 2.9 and Proposition 2.10]{biblia}). For a coherent sheaf $\calF$ on $\calX$, we define 

$$\calF(r):=\calF \tens{\calX} \calO_{\calX}(r).$$

If $\calX$ is a projected superscheme then we can use the projection map $p: \calX \to X$ to relate $\calO_{\calX}(r)$ and $\calO_{X}(r)$. 

\begin{lemma}
Let $\bB$ be a $\Zn$-graded superring,  $\calX =  \mathbb{P}\mathrm{roj}(\bB)$ and $\calF$ a coherent sheaf on $\calX$. If $\calX$ is projected the following holds.

\begin{itemize}
    \item[a)] $p^{*}(\mathcal{O}_{X}(r)) = \mathcal{O}_{\calX}(r). $
    \item[b)] $p_*(\mathcal{F}(a)) = p_*(\calF) \tens{\calO_X} \mathcal{O}_{X}(a).$
\end{itemize}

\end{lemma}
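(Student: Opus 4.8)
The plan is to prove part a) by an algebraic identity at the level of $\Zn$-graded modules, and then to deduce part b) as a formal consequence via the projection formula.

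For part a), I would first recall that the projection $p\colon \calX \to X$ is topologically the identity and that, at the level of $\Zn$-graded superrings, it is induced by the section $B \to \bB$ of the quotient $\bB \to B$ supplied by the splitting $\calO_\calX = \calO_X \oplus \calJ$ of the fundamental exact sequence. This makes $\bB$ a graded $B$-algebra. The key observation is then the graded $\bB$-module isomorphism
$$\bB[r] \;\cong\; \bB \tens{B} B[r],$$
which holds because shifting the grading commutes with extension of scalars: in degree $d$ one has $(\bB \tens{B} B[r])_d \cong \bB_{d+r} = \bB[r]_d$, so $\bB \tens{B} B[r] = (\bB \tens{B} B)[r] = \bB[r]$. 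Passing to the associated sheaves on $\mathbb{P}\mathrm{roj}(\bB)$ and using that the $\widetilde{(-)}$ construction turns this base change into a tensor product of sheaves, I obtain
$$\calO_\calX(r) \;\cong\; \calO_\calX \tens{\calO_X} \calO_X(r) \;=\; p^{*}\calO_X(r),$$
the last equality holding because $p^{-1}\calO_X = \calO_X$ as $p$ is the identity on the underlying space. To be safe I would also check this directly on the standard affine charts $D_+(x_i)$, where $\calO_X(r)$ is free of rank $1$ over $(B_{x_i})_0$ on the generator $x_i^{r}$ and $\calO_\calX(r)$ is free of rank $1|0$ over $(\bB_{x_i})_0$ on the same generator, verifying that the transition functions $(x_i/x_j)^r$ agree so that the local isomorphisms glue to a global one.

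For part b), I would combine part a) with the projection formula. Writing $\calF(a) = \calF \tens{\calO_\calX}\calO_\calX(a)$ and substituting $\calO_\calX(a) = p^{*}\calO_X(a)$ from part a), it suffices to establish
$$p_*\!\left(\calF \tens{\calO_\calX} p^{*}\calO_X(a)\right) \;\cong\; p_*(\calF) \tens{\calO_X} \calO_X(a).$$
This is exactly the projection formula for the morphism of ringed superspaces $p$, and it is valid since $\calO_X(a)$ is invertible, hence locally free of finite rank. Its proof is local: on a trivializing open set $\calO_X(a)$ is free of rank $1$, so both sides collapse to $p_*\calF$, and the resulting isomorphisms are compatible with the transition functions and therefore glue.

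The step I expect to be the main obstacle is part a): one must be careful that the definition of $\calO_\calX(r)$ as the sheafification of $\bB[r]$ is genuinely compatible with the base-change isomorphism $\bB[r] \cong \bB \tens{B} B[r]$, i.e. that the $\widetilde{(-)}$ functor commutes with this tensor product over the graded superrings in the super setting. Once part a) is secured, part b) is a formal application of the projection formula for an invertible sheaf and presents no real difficulty.
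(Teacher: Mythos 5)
Your proof is correct and takes essentially the same route as the paper: part a) rests on the graded-module identity $\bB[r]\cong \bB\tens{B}B[r]$ (the paper writes this as $(B\tens{B}\bB)[r]=\bB[r]$), using the $B$-algebra structure on $\bB$ coming from the projection, followed by sheafification, and part b) is the identical three-step chain of definition, projection formula, and part a). Your supplementary check on the charts $D_+(x_i)$ is harmless but unnecessary, and strictly speaking only applies when $\bB$ is the free polynomial superalgebra rather than a general $\Zn$-graded superring.
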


\begin{proof}
For part a) let $J \subset \bB$ denote the ideal generated by the nilpotent elements, and $B:= \bB/J$. Since $\bB$ has a natural strucuture of $B-$module, we see that $p^{*}(\mathcal{O}_{X}(r))$ is the localization of the ring in the left-hand side: 
$$(B \tens{B} \bB)[r] = \bB[r].$$
But the localization of the right-hand side is, by definition, $\mathcal{O}_{\calX}(r)$.

For the second part, observe that:

$$p_*(\calE(a)) = p_*(\mathcal{E}\tens{\calO_{\calX}}\calO_{\calX}(a)) = p_*(\calE \tens{\calO_{\calX}} p^*(\mathcal{O}_{X}(a)) = p_*(\calE) \tens{\calO_X} \calO_X(a). $$

\noindent The first isomorphism follows by definition, the second isomorphism is a consequence of the projection formula, and the third isomorphism comes from part a). \end{proof}


\begin{remark} \label{counter} We give a counterexample to part b) of Lemma \ref {commutesreduced}
when $\calE$ is not locally free. Consider the inclusion map $i:\Pn^{1|1} \hookrightarrow \Pn^{1|2}$, and note that $i_{*}(\calO_{\Pn^{1|1}})^\vee =  0$, whence $(i_{*}(\calO_{\Pn^{1|1}})^{\vee})_{red}= 0$. On the other hand, tensoring  the short exact sequence
  $$0 \to \mathcal{I}_{\Pn^{1|1}} \to \calO_{\Pn^{1|2}} \to i_{*}(\calO_{\Pn^{1|1}}) \to 0, $$
  \noindent by $\mathcal{O}_{\Pn^1}$, and using that $ \mathcal{I}_{\Pn^{1|1}}\tens{\calO_{\Pn^{1|2}}} \calO_{\Pn^{1}} = 0$,  we get that:
 $$ i_{*}(\calO_{\Pn^{1|1}})_{red} := i_{*}(\calO_{\Pn^{1|1}}) \tens{\calO_{\Pn^{1|2}}} \calO_{\Pn^{1}}  = \calO_{\Pn^{1}}, $$
 \noindent so that $i_{*}((\calO_{\Pn^{1|1}})_{red})^{\vee} = \calO_{\Pn^{1}}$.
  \end{remark}

With formula   \eqref{Osplit3} and Definition \ref{splitdef} we can recover  the formula for the grading of $\calO_{\Pn^{n|m}}$ on $\Pn^n$ (explicitly given in \cite[Equation (15)]{CN2018}). 
Working in some more generality, consider a split superscheme 
$\calX = (X, \bigwedge \mathcal L^{\oplus m})$
 where $X$ is any scheme, and $\mathcal L$ a line bundle on it. Denote by  $p:\calX \to X$ then natural projection map. Then 
\begin{equation*}
p_*(\mathcal{O}_{\calX}) = \bigoplus_{k = 0}^{\lfloor m/2 \rfloor} (\mathcal L^{2k}) ^{\oplus {m \choose 2k}} \oplus \Pi\bigoplus_{k = 0}^{\lfloor m/2 \rfloor-\frac12[1+(-1)^m]} (\mathcal L^{2k+1})^{\oplus {m \choose 2k+1}}.
 \end{equation*}
Additionally, for a locally free sheaf $\calE$ on $\calX$, we shall have:
\begin{equation}\label{EPn}
p_*(\calE) = \bigoplus_{k = 0}^{\lfloor m/2 \rfloor} (\calE_{red} \otimes \mathcal L^{2k})^{\oplus {m \choose 2k}} \oplus \Pi\bigoplus_{k = 0}^{\lfloor m/2 \rfloor-\frac12[1+(-1)^m]}  (\calE_{red} \otimes \mathcal L^{2k+1})^{\oplus {m \choose 2k+1}}.
 \end{equation}

We now define the super version  of the Rao modules of coherent sheaves on a superscheme. 
 
 \begin{definition} Let $\calX$ be a superscheme and let $\mathcal L$ be an ample line bundle on it \cite{biblia}. Given a coherent sheaf  $\calF$  on $\calX$ let $\calF(a) = \calF\otimes\mathcal L^a$. The Rao module associated with these data is 

     $$\displaystyle \bigoplus_{a \in \mathbb{Z}}H^i(X, \calF(a)).$$
It  has a structure of a $\mathbb{Z}$-graded module, defined by the twists indexed by $a$. Furthermore, since each $H^i(X, \mathcal{F}(a))$ is a $\mathbb{Z}_2$-graded module, it follows that $H^i_*(X, \mathcal{F})$ inherits the structure of a supermodule.
 \end{definition} 
 
 The following statement will summarize the results obtained  so far in this section, aiming to establish a relationship between the Rao module of a coherent sheaf on a projected superscheme and its image under the action of the projection map. The first claim   follows  from the equation \eqref{superreduced}. The second claim follows from equations \eqref{superreduced} and \eqref{EPn}. 

\begin{theorem}\label{reduced=super}
    Let $\calX$ be a projected superscheme and $\calE$ a locally free sheaf on $\calX$. Then $H^{i}_*(X, \calE)$ vanishes, if, and only if, both $H^{i}_*(X, \calE_{\mathrm{red}})$  and   $H^{i}_*(X, p_*(\calJ \calE))$ vanish. Moreover, if $\calX = (X, \bigwedge \mathcal L^{ - \oplus m})$, with $m$  a non-negative integer, and $\mathcal L$ an ample line bundle on $X$,  then $H^{i}_*(X, \calE) = 0$ if and only if $H^{i}_*(X, \calE_{\mathrm{red}}) = 0$.
\end{theorem}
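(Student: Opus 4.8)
The plan is to prove Theorem \ref{reduced=super} by analyzing the two summands in equation \eqref{superreduced} separately, establishing the general biconditional first and then specializing to the split case where we can control the second summand via equation \eqref{EPn}.

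For the first claim, I would proceed as follows. Since $\calX$ is projected, equation \eqref{superreduced} gives the direct sum decomposition $H^i(X, \calE(a)) = H^i(X, \calE_{\mathrm{red}}(a)) \oplus H^i(X, p_*(\calJ\calE)(a))$ for each twist $a$. (Here I am using Lemma part b) to commute the projection $p_*$ with the twist by $\mathcal L^a$, so that twisting and taking reduced parts are compatible across the whole graded module.) Assembling these over all $a \in \Zn$ yields a direct sum decomposition of the Rao modules,
\begin{equation*}
H^i_*(X, \calE) = H^i_*(X, \calE_{\mathrm{red}}) \oplus H^i_*(X, p_*(\calJ\calE)).
\end{equation*}
A direct sum of graded modules vanishes if and only if both summands vanish, which is exactly the asserted biconditional. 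The only point requiring care is that the decomposition \eqref{superreduced} be natural in $a$ so that it genuinely assembles into a splitting of graded modules rather than holding merely degree-by-degree; this follows because \eqref{Osplit3} is an isomorphism of $\calO_\calX$-modules, hence stable under tensoring with the line bundle $\mathcal L^a$.

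For the second claim, I would specialize to $\calX = (X, \bigwedge \mathcal L^{\oplus m})$ and use the explicit formula \eqref{EPn} for $p_*(\calE)$. The key observation is that each summand appearing in $p_*(\calJ\calE)$ is of the form $\calE_{\mathrm{red}} \otimes \mathcal L^k$ with $k \geq 1$; but since twisting by $\mathcal L^k$ merely reindexes the graded pieces of the Rao module (again by the Lemma), the cohomology $H^i_*(X, \calE_{\mathrm{red}} \otimes \mathcal L^k)$ is just a shift of $H^i_*(X, \calE_{\mathrm{red}})$. Therefore $H^i_*(X, \calE_{\mathrm{red}}) = 0$ forces $H^i_*(X, \calE_{\mathrm{red}} \otimes \mathcal L^k) = 0$ for all $k$, hence $H^i_*(X, p_*(\calJ\calE)) = 0$, and the first claim then gives $H^i_*(X, \calE) = 0$. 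The converse direction is immediate from the first claim, so the two conditions are equivalent.

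The main obstacle I anticipate is the bookkeeping required to show that the graded reindexing argument is correct: one must verify that $H^i_*(X, \calG \otimes \mathcal L^k)$ is genuinely an isomorphic (shifted) copy of $H^i_*(X, \calG)$ as a graded module, which relies on $\mathcal L$ being the very ample bundle defining the twisting operation — i.e.\ that the ample $\mathcal L$ in the statement coincides with the $\mathcal L$ used to build $\bigwedge\mathcal L^{\oplus m}$, so that $\mathcal L^a \otimes \mathcal L^k = \mathcal L^{a+k}$ matches the grading shift. Once this identification is in place the argument is routine, but it is the step where the hypotheses must be deployed precisely, and I would state explicitly that the ample line bundle in the definition of the Rao module is taken to be this same $\mathcal L$.
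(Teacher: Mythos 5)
Your proposal is correct and follows essentially the same route as the paper, which derives the first claim directly from the decomposition \eqref{superreduced} and the second from combining \eqref{superreduced} with the explicit formula \eqref{EPn}; your additional care about compatibility of the splitting with twisting (via the projection formula) and about the ample $\mathcal L$ in the Rao module coinciding with the $\mathcal L$ in the exterior algebra only makes explicit what the paper leaves implicit.
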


For more  on cohomology of coherent sheaves on superprojective schemes see \cite[Sections 2.7 and 2.8]{biblia}.

We end this section briefly discussing the relationship between a morphism $\varphi \in \lhom_{\calO_{\calX}}(\calF, \calE)$ and its bosonic reduction, that is, its image  $\tilde{\varphi} \in \lhom_{\calO_{X}}(\calF_{red}, \calE_{red})$, \blue{$\tilde\varphi=\varphi\otimes 1$},
for $\calF, \calE$ locally free sheaves of rank $(p_1|q_1)$ and $(p_2|q_2)$, respectively,  on a superscheme $\calX$. 

By taking a simultaneous trivializing open cover of $\calX$, it is enough to make this discussion for homomorphisms $f \in Hom_{\bB}(M,N)$ and its image $\tilde{f} \in Hom_{B}(M/JM,N/JN)$ for free modules $M$ and $N$ of rank $(p_1|q_1)$ and $(p_2|q_2)$, respectively, for a superring $\bB$ with nilpotent ideal $J$. Such $f$ has the following representation:

$$
f = \begin{bmatrix}
A & B \\
C & D
\end{bmatrix},
$$

\noindent where $A,B,C$ and $D$ are matrices with coefficients in $\bB$ and have orders  equals to $p_1\times p_2, p_1\times q_2, q_1\times p_2$ and $p_2 \times q_2$ respectively, with $A$ and $B$ being parity preserving (even) morphisms and $C,D$ being parity reversing (odd) morphisms. For each $a \in \bB$, we denote its image in $\bB/J$ by  $\tilde{a}$. We thus can define for each matrix $Q = (q)_{i,j}$ with entries in $\bB$ the corresponding matrix $\tilde{Q} = (\tilde{q})_{i,j}$, and thus, for $f$ as given above, we define:

$$
\tilde{f}= \begin{bmatrix}
\tilde{A} & \tilde{B} \\
\tilde{C} & \tilde{D}
\end{bmatrix}.
$$

\noindent In particular, since the entries of $B$ and $C$ are odd elements, we have that $\tilde{B} = 0$ and $\tilde{C} = 0$, then, we actually have:

$$
\tilde{f}= \begin{bmatrix}
\tilde{A} & 0 \\
0 & \tilde{D}
\end{bmatrix}.
$$

For $p_1 = p_2 = p$ and $q_1=q_2=q$ we can discuss the invertibility   $f$ based on the invertibilty of $\tilde{f}$.  

Now, by \cite[Corollary 3.1]{BBH2012}, $f$ is invertible  if, and only if, $A$ and $D$ are invertible, and the same applies to  $\tilde{f}$, that is, $\tilde{f}$ is invertible if, and only if, $\tilde{A}$ and $\tilde{D}$ are invertible. So, the question of the inversibility of $f$, provided the inversibility of $\tilde{f}$ boils down to proving the following:

\begin{lemma}\label{inversible}
    Let $\psi : \bB \to \bB$ be a superring homomorphism, $\tilde{\psi}$ its bosonic reduction and $x \in \bB$. Then $\psi(x)$ admtis a multiplicative inverse in $\bB$ if and only if $\tilde{\psi}(x)$ admits a multiplicative inverse in $B = \bB/J$, where $J$ is the ideal of the odd elements.   
\end{lemma}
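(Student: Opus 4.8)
The plan is to reduce the statement to the classical fact that, modulo a nilpotent two-sided ideal, an element of a ring is a unit precisely when its image in the quotient is. First I would set $u := \psi(x) \in \bB$ and observe that, since a superring homomorphism preserves parity, it sends odd elements to odd elements and hence $\psi(J) \subseteq J$; consequently $\psi$ descends to its bosonic reduction $\tilde\psi \colon B \to B$ with $\tilde\psi(\tilde x) = \widetilde{\psi(x)} = \tilde u$. In this way the assertion becomes exactly the statement that $u$ is invertible in $\bB$ if and only if $\tilde u$ is invertible in $B = \bB/J$, and it is enough to prove this for an arbitrary element $u \in \bB$.

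The forward implication is immediate: applying the quotient homomorphism $\bB \to B$ to a relation $u\,u^{-1} = u^{-1} u = 1$ shows that $\tilde u$ is a unit of $B$, with inverse $\widetilde{u^{-1}}$. For the converse, the essential input is that $J$ is a nilpotent ideal. I would argue this as follows: since $\bB$ is supercommutative over a field of characteristic $0$, every odd element squares to zero, and by hypothesis $J$ is generated as an ideal by finitely many odd elements $\theta_1,\dots,\theta_N$. Expanding a product of $N+1$ elements of $J$ and using supercommutativity to collect the generators produces monomials $\theta_{i_1}\cdots\theta_{i_{N+1}}$ in which, by the pigeonhole principle, some index repeats; such a monomial vanishes, so $J^{N+1} = 0$.

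Granting nilpotence, suppose $\tilde u$ is a unit in $B$ and choose $v \in \bB$ lifting $\tilde u^{-1}$. Then $\widetilde{uv} = 1$, so $uv = 1 - w$ with $w \in J$, and since $w$ is nilpotent we may form $(1-w)^{-1} = 1 + w + \dots + w^{N}$; associativity then gives $u\bigl(v(1-w)^{-1}\bigr) = (uv)(1-w)^{-1} = 1$, so $u$ has a right inverse. The symmetric computation, starting from a lift $v'$ with $v'u = 1 - w'$ and $w' \in J$, yields a left inverse $(1-w')^{-1}v'$. As $\bB$ is associative, the existence of both a left and a right inverse forces them to coincide and $u$ to be a unit, completing the converse.

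The only point requiring care — rather than a genuine obstacle — is that $\bB$ is supercommutative rather than commutative, so I keep track of left and right inverses separately and invoke associativity to merge them; everything else is the standard geometric-series lift along the nilpotent ideal $J$, whose nilpotence is the real content to nail down first.
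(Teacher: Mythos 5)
Your proof is correct, and it runs on the same engine as the paper's own argument --- inverting an element congruent to $1$ modulo the nilpotent ideal $J$ by a finite geometric series --- but it is organized differently at the key step, and in two respects it is more general. The paper decomposes $\psi(x)=\psi_0(x)+\psi_J(x)$ with $\tilde{\psi}(x)=\psi_0(x)$, which tacitly uses a ring-theoretic section $B\hookrightarrow \bB$ (i.e.\ the splitting $\bB=B\oplus J$, available for the split superrings relevant to $\Pn^{n|m}$), asserts that $\psi_0(x)$ is itself invertible in $\bB$, and then expands $\psi_0(x)^{-1}\bigl(1+\psi_J(x)/\psi_0(x)\bigr)^{-1}$. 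You avoid any choice of splitting: you lift an arbitrary inverse $v$ of $\tilde{u}$ from the quotient and invert $1-w$ with $w=1-uv\in J$, so your argument applies verbatim to superrings that are not projected, where no canonical representative $\psi_0(x)$ exists. Second, you actually prove the nilpotence $J^{N+1}=0$ (pigeonhole on the finitely many odd generators together with $\theta_i^2=0$), whereas the paper's proof simply invokes ``$J$ is a nilpotent ideal''; note that $\theta_i^2=0$ comes from $2\theta_i^2=0$ and hence needs the absence of $2$-torsion, which holds over the paper's characteristic-zero field $\mathbb{K}$ but would fail for a literal ``arbitrary superring'' in characteristic $2$, so your explicit hypothesis is the right thing to record. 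Finally, your separate bookkeeping of left and right inverses is justified rather than pedantic: for an inhomogeneous element $u$ supercommutativity does not immediately give $uv=vu$, and the paper's fraction notation $\frac{1}{\psi_0(x)}$ quietly sidesteps this point; merging the two one-sided inverses by associativity, as you do, is the clean way to close the argument.
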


\begin{proof}
    The ``only if ''part is straightfoward, so that we only prove the converse. Let $\psi : \bB \to \bB$ as in the statement, and suppose that $\tilde{\psi}(x)$ admits a multiplicative inverse in $B = \bB/J$. Since $\tilde{\psi}(x) \in \bB/J$, we can write $\psi(x) = \psi_0(x)+\psi_J(x)$, where $\tilde{\psi}(x) = \psi_0(x)$ and as by hypothesis $\tilde{\psi}(x)$ admits a multiplicative inverse, it follows that $\frac{1}{\psi_0(x)} \in \bB$ and $\psi_J(x) \in J$. Additionally, as $J$ is a nilpotent ideal, it follows that there exists $N \in \mathbb{N}$, such that $\psi_J(x)^N = 0$. Now, we have that 
    $$\psi(x) = \psi_0(x)+\psi_J(x) = \psi_0(x)\left(1+\frac{\psi_J(x)}{\psi_0(x)}\right), $$
and thus
\begin{multline*}
\displaystyle \frac{1}{\psi(x)} = \frac{1}{\psi_0(x)} \cdot \frac{1}{\left( 1 + \frac{\psi_J(x)}{\psi_0(x)} \right)} =  \\ \frac{1}{\psi_0(x)} \left( 1 - \frac{\psi_J(x)}{\psi_0(x)} + \left( \frac{\psi_J(x)}{\psi_0(x)} \right)^2 + \cdots + (-1)^{N-1} \left( \frac{\psi_J(x)}{\psi_0(x)} \right)^{N-1} \right).
\end{multline*}
is the inverse of $\psi(x)$. 
\end{proof}

Now, we apply the Lemma \ref{inversible} to our setting. Let $\psi_1 = \det A$ and $\psi_2 = \det D$. It is easy to see that $\tilde{\psi_1} = \det \tilde{A}$ and $\tilde{\psi_2} = \det \tilde{D}$. Additionally, we have that $\psi_1 $ and $\psi_2$ are invertible if, and only if, $\tilde{\psi_1}$ and $\tilde{\psi_2}$ are invertible. Wich implies that $f$ is invertible if, and only if $\tilde{f}$ is invertible.  So we have proved the following result.

\begin{theorem}\label{bosonicequivalence}
 Let $\calX = (X, \calO_{\calX})$ be a projective superscheme, $\calF, \calE$ locally free sheaves both of rank $p|q$ on $\calX$. Then $\varphi \in \lhom_{\calO_{\calX}}(\calF, \calE)$ is \blue{an isomophism}  if and only if  its bosonic reduction,   $\tilde{\varphi} \in \lhom_{\calO_{X}}(\calF_{red}, \calE_{red})$, $\tilde\varphi = \varphi \otimes 1$, is \blue{an isomophism}.
\end{theorem}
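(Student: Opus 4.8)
The plan is to reduce the global statement to the local algebraic computation already carried out in the discussion preceding the statement, exploiting the fact that being an isomorphism of locally free sheaves is a local property. The forward (``only if'') direction is immediate: the functor $-\tens{\calO_\calX}\calO_X$ sends $\varphi$ to $\tilde\varphi = \varphi\otimes 1$ and sends an inverse $\varphi^{-1}$ to an inverse of $\tilde\varphi$, so if $\varphi$ is an isomorphism then so is $\tilde\varphi$. The content is the converse. Here I would test the condition on a simultaneous trivializing affine cover $\{U_\alpha\}$ of $\calX$, so that over each $U_\alpha$ the map $\varphi$ becomes a matrix $f\in\Hom_{\bB}(M,N)$ between free supermodules of rank $p|q$ over $\bB=\calO_\calX(U_\alpha)$, while $\tilde\varphi$ becomes $\tilde f\in\Hom_B(M/JM,N/JN)$ over $B=\bB/J$.

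Next I would invoke the block description set up above: writing $f$ with even diagonal blocks $A$ (of size $p\times p$) and $D$ (of size $q\times q$) and odd off-diagonal blocks, the reduction $\tilde f$ is block diagonal with blocks $\tilde A$ and $\tilde D$, since the odd off-diagonal entries lie in $J$ and hence vanish in $B$. By \cite[Corollary 3.1]{BBH2012}, $f$ is invertible over $\bB$ if and only if its even diagonal blocks $A$ and $D$ are invertible; the same result applied to the already block-diagonal $\tilde f$ shows that $\tilde f$ is invertible if and only if $\tilde A$ and $\tilde D$ are invertible. It therefore suffices to prove that $A$ is invertible over $\bB$ if and only if $\tilde A$ is invertible over $B$, and likewise for $D$. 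Since these blocks are square, this is exactly Lemma \ref{inversible} read through determinants: $A$ is invertible iff $\det A\in\bB$ is a unit, $\tilde A$ is invertible iff $\det\tilde A=\widetilde{\det A}$ is a unit, and taking $\psi=\mathrm{id}$ and $x=\det A$ (resp.\ $x=\det D$) in Lemma \ref{inversible} gives that $\det A$ is a unit in $\bB$ precisely when $\widetilde{\det A}$ is a unit in $B$. Chaining these equivalences yields, on each chart $U_\alpha$, that $f$ is invertible if and only if $\tilde f$ is invertible.

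To finish I would use that an $\calO_\calX$-linear map between locally free sheaves is an isomorphism if and only if it is invertible over each member of an open cover: as $\tilde\varphi$ is an isomorphism it is invertible over every $U_\alpha$, hence so is $\varphi$, and therefore $\varphi$ is a global isomorphism. The only genuinely delicate ingredient is the passage, in the super setting, from invertibility of the full matrix $f$ to invertibility of its even diagonal blocks, where the odd (hence nilpotent) off-diagonal entries must be controlled; this is precisely what the cited super-Cramer statement \cite[Corollary 3.1]{BBH2012} absorbs, with Lemma \ref{inversible} supplying the lifting of units along the nilpotent extension $\bB\to B$. No further estimate is required. A more conceptual alternative would be a graded-Nakayama argument resting on the nilpotence of $\calJ$, but the determinant route is cleaner given the machinery already in place.
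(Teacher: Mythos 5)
Your proposal is correct and follows essentially the same route as the paper: local reduction to free modules over $\bB$ on a trivializing cover, the block decomposition with $\tilde f$ block diagonal, the criterion of \cite[Corollary 3.1]{BBH2012} reducing invertibility of $f$ to that of the even blocks $A$ and $D$, and Lemma \ref{inversible} to lift invertibility of the determinants along the nilpotent extension $\bB\to B$. If anything, your reading of Lemma \ref{inversible} with $\psi=\mathrm{id}$ and $x=\det A$ is slightly cleaner than the paper's phrasing ``let $\psi_1=\det A$'' (a determinant is not a superring homomorphism), but the substance is identical.
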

    
\bigskip
\section{Splitting criteria  for supervector bundles on superprojective spaces}

In this section we focus on studying the splitting behavior of supervector bundles on $\Pn^{n|m}$, based on the vanishing of their intermediate Rao modules.  On the usual projective spaces $\Pn^n$ Horrocks' splitting criteria is the following:

\begin{theorem}[Horrocks \cite{H1964}]\label{Horrocks}
    Let $E$ be a rank $r$ vector bundle on $\Pn^n$. Then $H^i_*(\Pn^n, E)$ vanishes for $1 \leq i \leq n-1$ if, and only if, there exists integers $r$ integers $a_1 \geq a_2 \geq \cdots \geq a_r$ such that:

     $$E = \bigoplus_{i=1}^r \calO_{\Pn^n}(a_i).$$ 
\end{theorem}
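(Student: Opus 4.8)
The \emph{if} direction is immediate and I would dispose of it first: for a single line bundle one has $H^i(\Pn^n,\calO_{\Pn^n}(a))=0$ for every $a\in\Zn$ and every $1\le i\le n-1$, and this property is inherited by finite direct sums and preserved under twisting, so $H^i_*(\Pn^n,\bigoplus_i\calO_{\Pn^n}(a_i))=0$ in the required range. The substance is the converse. My plan is to avoid any hyperplane induction and instead pass to the graded module of global sections over the homogeneous coordinate ring, reducing the statement to the commutative-algebra fact that a maximal Cohen--Macaulay module over a polynomial ring is free. Concretely, set $S=\mathbb{K}[x_0,\dots,x_n]$ with irrelevant ideal $\mathfrak m=(x_0,\dots,x_n)$, and form
\[ M=\Gamma_*(E)=\bigoplus_{a\in\Zn}H^0(\Pn^n,E(a)), \]
a finitely generated graded $S$-module with $\widetilde M\cong E$. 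It suffices to prove that the hypothesis forces $M$ to be free, since sheafifying a graded free module yields a direct sum of line bundles.

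The bridge between the vanishing hypothesis and a statement about $M$ is the local-cohomology dictionary on $S$. Because $M=\Gamma_*(E)$ is saturated, the natural map $M\to\bigoplus_a H^0(\Pn^n,E(a))$ is an isomorphism, so $H^0_{\mathfrak m}(M)=H^1_{\mathfrak m}(M)=0$; and for $i\ge 2$ one has $H^i_{\mathfrak m}(M)\cong\bigoplus_a H^{i-1}(\Pn^n,E(a))=H^{i-1}_*(\Pn^n,E)$. The assumption $H^{j}_*(\Pn^n,E)=0$ for $1\le j\le n-1$ therefore translates, after the index shift, into $H^i_{\mathfrak m}(M)=0$ for all $0\le i\le n$. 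Consequently $\operatorname{depth}_{\mathfrak m}M\ge n+1$; combined with $\operatorname{depth}_{\mathfrak m}M\le\dim M\le\dim S=n+1$ this forces $\operatorname{depth}_{\mathfrak m}M=\dim S=n+1$, i.e. $M$ is a maximal Cohen--Macaulay $S$-module.

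To conclude I would invoke the Auslander--Buchsbaum formula over the regular ring $S$, namely $\operatorname{pd}_S M+\operatorname{depth}_{\mathfrak m}M=\operatorname{depth}S=n+1$, which gives $\operatorname{pd}_S M=0$. Hence $M$ is projective, and being finitely generated and graded over a polynomial ring it is free: $M\cong\bigoplus_{i}S(a_i)$ for some integers $a_i$. Applying the sheafification functor $\widetilde{(-)}$ and using $\widetilde{S(a)}=\calO_{\Pn^n}(a)$ yields $E\cong\bigoplus_i\calO_{\Pn^n}(a_i)$. The number of summands is forced to equal $\operatorname{rank}E=r$, and after reordering we may arrange $a_1\ge a_2\ge\cdots\ge a_r$, as required.

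The main obstacle is entirely in the local-cohomology bookkeeping of the second paragraph: one must verify carefully that saturation of $M$ kills $H^0_{\mathfrak m}$ and $H^1_{\mathfrak m}$, and that the degree shift in $H^i_{\mathfrak m}(M)\cong H^{i-1}_*(\Pn^n,E)$ aligns the hypothesized vanishing range $1\le j\le n-1$ exactly with $0\le i\le n$, so that no ``middle'' local cohomology survives to obstruct maximal depth; once this is pinned down the rest is standard. An alternative, more classical route proceeds by induction on $n$, restricting $E$ to a general hyperplane $H\cong\Pn^{n-1}$ through $0\to E(-1)\to E\to E|_H\to 0$ to transport the cohomology vanishing and split $E|_H$ by the inductive hypothesis; there the genuinely hard step is \emph{lifting} the splitting from $H$ back to $\Pn^n$, which is precisely the difficulty that the module-theoretic argument above sidesteps.
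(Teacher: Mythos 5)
Your proof is correct, and it takes a genuinely different route from the one the paper has in mind: the paper does not prove Theorem \ref{Horrocks} at all, but cites \cite{H1964} and sketches the classical strategy --- Birkhoff--Grothendieck on $\Pn^1$ as base case, then induction on $n$ via the restriction sequence $0 \to E(-1) \to E \to E|_H \to 0$ for a hyperplane $H$, where the hard step is lifting the splitting of $E|_H$ back to $\Pn^n$. Your argument instead passes to $M=\Gamma_*(E)$ over $S=\mathbb{K}[x_0,\dots,x_n]$ and uses the local-cohomology dictionary: saturation kills $H^0_{\mathfrak m}(M)$ and $H^1_{\mathfrak m}(M)$, the isomorphisms $H^i_{\mathfrak m}(M)\cong H^{i-1}_*(\Pn^n,E)$ for $i\ge 2$ convert the hypothesis $H^j_*(\Pn^n,E)=0$ for $1\le j\le n-1$ into $H^i_{\mathfrak m}(M)=0$ for $0\le i\le n$, whence $\operatorname{depth} M=n+1$, and Auslander--Buchsbaum plus graded Nakayama (a finitely generated graded projective module over $S$ is graded free) give $M\cong\bigoplus_i S(a_i)$ and hence $E\cong\bigoplus_i\calO_{\Pn^n}(a_i)$; your index bookkeeping is right, and the only points worth making explicit are the finite generation of $\Gamma_*(E)$ (Serre) and that the ``saturation'' claim means precisely $M\xrightarrow{\ \sim\ }\Gamma_*(\widetilde M)$, which holds by construction. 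As for what each approach buys: your module-theoretic proof is shorter, characteristic-free, needs no base case (for $n=1$ the hypothesis is vacuous and you recover Birkhoff--Grothendieck for free), and sidesteps exactly the lifting difficulty you identify at the end; the classical hyperplane induction, by contrast, is the template that generalizes to quadrics and Grassmannians as in \cite{O89}, and its characteristic difficulty --- lifting an isomorphism against an obstruction in an $H^1$ --- is precisely the mechanism the paper's own main theorem adapts to the super setting, lifting the isomorphism $\varphi:\mathcal F_{red}\to\mathcal E_{red}$ along the vanishing of $H^1(\Pn^n,\calJ(\calE\otimes\calF^{\vee}))$, which is why the authors recall that version of the proof rather than yours.
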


 The classical proof proceeds by first establishing the result for $\mathbb{P}^1$,  which is  the Birkhoff-Grothendieck splitting criterion, and then applying induction via the restriction sequence associated with a hyperplane to show that the criterion holds for any $\mathbb{P}^n$.

A natural approach  would be to attempt to adapt the classical proof to our setting. However, as  seen in  \cite[Theorem 2]{CN2018}, for $m \geq 2$ the even Picard group of $\mathbb{P}^{1|m}$, which classifies line bundles of rank $1|0$, is not discrete, and  there are rank $1|0$ 
sheaves that are not of the form $\mathcal{O}_{\mathbb{P}^{1|m}}(a)$ for some $a \in \mathbb{Z}$, see  \cite[Example 1]{CN2018}.
For the case of $\mathbb{P}^{1|1}$, the even Picard group is isomorphic to $\mathbb{Z}$, but as our next example will show, there are locally free sheaves  that are not  sums of sheaves of the type  $\mathcal{O}_{\mathbb{P}^{1|1}}(a)$ and $\Pi \mathcal{O}_{\mathbb{P}^{1|1}}(b)$.

\begin{example}
   The supervector bundle $T_{\mathbb{P}^{1|1}}$, the tangent bundle to
  $\mathbb{P}^{1|1}$,
   does not split as sums of line bundles.  Suppose indeed that it does, then
$$T_{\mathbb{P}^{1|1}} = \mathcal{O}_{\Pn^{1|1}}(a) \oplus \Pi \mathcal{O}_{\Pn^{1|1}}(b)$$
for some $a$ and $b$.
 Hence  we would have
$$Hom_{\Pn^{1|1}}(T_{\mathbb{P}^{1|1}}, T_{\mathbb{P}^{1|1}}) = Hom_{\Pn^{1|1}}(\mathcal{O}_{\Pn^{1|1}}(a) \oplus \Pi \mathcal{O}_{\Pn^{1|1}}(b), \mathcal{O}_{\Pn^{1|1}}(a) \oplus \Pi \mathcal{O}_{\Pn^{1|1}}(b)), $$
so that 
$$Hom_{\Pn^{1|1}}(T_{\mathbb{P}^{1|1}}, T_{\mathbb{P}^{1|1}}) = \mathbb{C}^{2|0} \oplus \Pi H^0(\Pn^{1}, \mathcal{O}_{\Pn^{1|1}}(a-b)) \oplus \Pi H^0(\Pn^{1}, \mathcal{O}_{\Pn^{1|1}}(b-a)). $$

So   we have:
\begin{equation}\label{dimHomT}
\dim Hom_{\Pn^{1|1}}(T_{\mathbb{P}^{1|1}}, T_{\mathbb{P}^{1|1}}) =
\begin{cases} 
{2|a-b+1} & \text{if } a-b>0; \\
 {2|b-a+1} & \text{if } a-b<0; \\
 {2|2} & \text{if } a=b. 
\end{cases}
\end{equation}

 On the other hand, consider the Euler exact sequence and its dual (see \cite{CN2018} for details):
$$0 \to \mathcal{O}_{\mathbb{P}^{1|1}} \to \mathbb{C}^{2|1}\otimes \mathcal{O}_{\mathbb{P}^{1|1}}(1) \to T_{\mathbb{P}^{1|1}} \to 0, $$
$$0 \to  \Omega_{\mathbb{P}^{1|1}}\to \mathbb{C}^{2|1}\otimes \mathcal{O}_{\mathbb{P}^{1|1}}(-1) \to \mathcal{O}_{\mathbb{P}^{1|1}} \to 0.  $$
We can compute the cohomologies (in this low dimensional case, it is just a diagram chasing, or we can use  \cite[Corollary 4.10]{GS2024}) to see that:$$H^0(\mathbb{P}^1, \Omega_{\mathbb{P}^{1|1}}(1)) = H^1(\mathbb{P}^1, \Omega_{\mathbb{P}^{1|1}}(1)) = H^0(\mathbb{P}^1 ,\Omega_{\mathbb{P}^{1|1}})=0,$$
\noindent and
$$H^1(\mathbb{P}^1, \Omega_{\mathbb{P}^{1|1}})=\mathbb{C}^{3|1}.$$

So   tensoring the Euler sequence by $\Omega_{\mathbb{P}^{1|1}}$, and considering its long exact sequence of cohomology, we   have  
$$H^0(\mathbb{P}^{1}, T_{\mathbb{P}^{1|1}} \otimes \Omega_{\mathbb{P}^{1|1}} )  =   H^1(\mathbb{P}^{1}, \Omega_{\mathbb{P}^{1|1}}) . $$
Since $Hom_{\Pn^{1|1}}(T_{\mathbb{P}^{1|1}}, T_{\mathbb{P}^{1|1}}) = H^0(\mathbb{P}^{1}, T_{\mathbb{P}^{1|1}} \otimes \Omega_{\mathbb{P}^{1|1}} )$, we have   $\dim Hom_{\Pn^{1|1}}(T_{\mathbb{P}^{1|1}}, T_{\mathbb{P}^{1|1}}) = 3|1$, which contradicts   equation \eqref{dimHomT}. This implies that $T_{\Pn^{1|1}}$ cannot split as a sum  of line bundles. 
 \end{example}
One could argue, as suggested in \cite{Noja2018}, that the correct notion   to generalize invertible sheaves to  the super setting are the so called $\Pi$-invertible sheaves, e.g., rank $1|1$-locally free sheaves $\mathcal{L}$, such that there is an odd endomorphism $\Pi: \mathcal{L} \to \mathcal{L}$, exchanging their even with their odd part; so we should look for a splitting criteria for locally free sheaves in terms of $\Pi$-invertible sheaves. But, by \cite[Corollaries 1 and 2]{CN2018}, the only $\Pi$-invertible sheaves on $\mathbb{P}^{1|1}$ are of the form $\mathcal{O}_{\Pn^{1|1}}(a) \oplus \Pi \mathcal{O}_{\Pn^{1|1}}(a)$, hence we have that $T_{\mathbb{P}^{1|1}}$ is not $\Pi$-invertible, therefore such a criterion  would not hold even in $\mathbb{P}^{1|1}$.
 
We are now ready to state and prove the main result of this work.
\begin{theorem}
Let $\mathcal{E}$ be a rank $p|q$ supervector bundle on $\Pn^{n|m}$, with $n \geq 2$ and $m \geq 1,$ such that $H^{i}_*(\Pn^{n}, \mathcal{E}) = 0$, for $ 1 \leq i \leq n-1$. Then there exist two sequences of integers $a_1 \geq a_2 \geq \ldots, \geq a_p$ and $b_1 \geq b_2 \geq \ldots, \geq b_q$ such that:
 \[\mathcal{E}  =  \displaystyle \bigoplus^{p}_i \mathcal{O}_{\Pn^{n|m}}(a_i) \bigoplus^{q}_i \Pi \mathcal{O}_{\Pn^{n|m}}(b_i) \]    
\end{theorem}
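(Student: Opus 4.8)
The plan is to transfer the problem to the bosonic reduction $X = \Pn^n$, apply the classical Horrocks criterion there, and then lift the resulting splitting back to $\calX = \Pn^{n|m}$ using its split structure. First I would invoke Theorem \ref{reduced=super}: since $\calO_{\Pn^{n|m}} = \bigwedge \calO_{\Pn^n}(-1)^{\oplus m} = \bigwedge \mathcal{L}^{-\oplus m}$ with $\mathcal{L} = \calO_{\Pn^n}(1)$ ample, the hypothesis $H^i_*(\Pn^n, \calE) = 0$ for $1 \le i \le n-1$ is equivalent to $H^i_*(\Pn^n, \calE_{red}) = 0$ in the same range. Writing $\calE_{red} = E_0 \oplus \Pi E_1$ with $E_0$ and $E_1$ honest vector bundles of ranks $p$ and $q$ on $\Pn^n$, and noting that cohomology is insensitive to the parity shift $\Pi$, this vanishing is equivalent to the simultaneous vanishing of the intermediate cohomology of $E_0$ and of $E_1$.

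Next I would apply Horrocks' splitting criterion (Theorem \ref{Horrocks}) separately to $E_0$ and $E_1$, obtaining integers $a_1 \ge \dots \ge a_p$ and $b_1 \ge \dots \ge b_q$ with $E_0 = \bigoplus_i \calO_{\Pn^n}(a_i)$ and $E_1 = \bigoplus_i \calO_{\Pn^n}(b_i)$. I would then introduce the candidate split supervector bundle
\[ \calF = \bigoplus_{i=1}^p \calO_{\Pn^{n|m}}(a_i) \oplus \Pi \bigoplus_{i=1}^q \calO_{\Pn^{n|m}}(b_i), \]
of rank $p|q$. Since reduction commutes with direct sums and with $\Pi$, and $(\calO_{\Pn^{n|m}}(a))_{red} = \calO_{\Pn^n}(a)$, the graded sheaf $\calF_{red}$ coincides with $\calE_{red}$; in particular the identity furnishes an even isomorphism $\tilde\varphi \colon \calF_{red} \xrightarrow{\sim} \calE_{red}$.

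The crucial step is to lift $\tilde\varphi$ to an even morphism $\varphi \colon \calF \to \calE$ of $\calO_{\calX}$-modules with $\varphi \otimes 1 = \tilde\varphi$. Set $\calG = \lhom_{\calO_{\calX}}(\calF, \calE) = \calF^\vee \tens{\calO_{\calX}} \calE$. Both parts of Lemma \ref{commutesreduced} (applicable since $\calF$ is locally free) identify $\calG_{red}$ with $\lhom_{\calO_X}(\calF_{red}, \calE_{red})$, so $H^0(X, \calG_{red}) = \Hom_{\calO_X}(\calF_{red}, \calE_{red})$. Because $\calX$ is split, hence projected, applying \eqref{superreduced} with $i = 0$ to $\calG$ gives the $\Zn_2$-graded decomposition
\[ \Hom_{\calO_{\calX}}(\calF, \calE) = H^0(X, \calG) = H^0(X, \calG_{red}) \oplus H^0(X, p_*(\calJ \calG)), \]
in which the projection onto the first summand is precisely the reduction map $\varphi \mapsto \tilde\varphi$. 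This map is therefore surjective, and as the decomposition respects the $\Zn_2$-grading it is surjective on even homomorphisms; hence $\tilde\varphi$ admits an even lift $\varphi$. Finally, since $\tilde\varphi$ is an isomorphism, Theorem \ref{bosonicequivalence} guarantees that $\varphi$ is an isomorphism, whence $\calE \cong \calF$, the claimed splitting.

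I expect the lifting step to be the main obstacle, and it is precisely where the split structure of $\Pn^{n|m}$ enters decisively: for a general superscheme there is no reason for the reduction map on global homomorphisms to be surjective, so an abstract isomorphism of reduced bundles need not arise from an actual super-morphism. The remaining care is bookkeeping of parities — ensuring the lift $\varphi$ is even and that invertibility genuinely transfers from $\tilde\varphi$ to $\varphi$ via Theorem \ref{bosonicequivalence} — together with the routine verifications that reduction commutes with the constructions used.
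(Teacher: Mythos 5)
Your overall strategy --- reduce to $\Pn^n$, split $\calE_{red}$ via Horrocks, build the candidate $\calF$, lift an isomorphism of reductions, and conclude with Theorem \ref{bosonicequivalence} --- is exactly the paper's, but your lifting step has a genuine gap, and it occurs at precisely the point where the hypotheses of the theorem must enter. You make the lifting \emph{unconditional}: applying \eqref{superreduced} with $i=0$ to $\calG=\lhom_{\calO_{\calX}}(\calF,\calE)$, you assert that the projection onto the summand $H^0(X,\calG_{red})$ is the reduction map $\varphi\mapsto\tilde\varphi$, hence that this map is surjective for any pair of locally free sheaves on a split superscheme. Notice that nowhere in your lifting argument do you use $n\ge 2$ or the vanishing $H^1_*(\Pn^n,\calE)=0$; since for $n=1$ the splitting of $\calE_{red}$ is automatic by Birkhoff--Grothendieck, your proof would show that \emph{every} supervector bundle on $\Pn^{1|m}$ splits as a sum of line bundles --- contradicting the paper's own example that $T_{\Pn^{1|1}}$ does not split. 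So the identification you rely on cannot hold.

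The precise failure: equation \eqref{Osplit} splits the fundamental sequence of $\calO_{\calX}$ because the section is the ring morphism $p^{\sharp}\colon\calO_X\to\calO_{\calX}$; but $p^{\sharp}(\calO_X)$ is a subring, not an $\calO_{\calX}$-submodule, so one cannot simply tensor by $\calG$ to obtain a splitting of $0\to\calJ\calG\to\calG\to\calG_{red}\to 0$ \emph{compatible with the reduction map} --- there is no natural $\calO_X$-linear section $\calG_{red}\to p_*\calG$ for a general locally free $\calG$. The reduction map on global homomorphisms is the $H^0$-level map in the long exact sequence of the fundamental sequence, and its failure to be surjective is measured by the connecting map into $H^1(\Pn^n,\calJ\calG)$. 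This is exactly how the paper proceeds: it places the obstruction to lifting $\varphi$ in $H^1(\Pn^{n}, \calJ(\calE \otimes \calF^{\vee}))$, uses $\calE\otimes\calF^{\vee}=\bigoplus_i\calE(-a_i)\oplus\bigoplus_i\Pi\calE(-b_i)$ to identify $H^1(\Pn^n,\calE\otimes\calF^{\vee})$ with a sum of twists $H^1(\Pn^n,\calE(-a_i))$, $H^1(\Pn^n,\calE(-b_i))$, and kills the obstruction using the hypothesis $H^{1}_*(\Pn^{n}, \mathcal{E}) = 0$ --- available precisely because $n\geq 2$ puts $i=1$ in the assumed vanishing range. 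Your closing remark that surjectivity of the reduction map can fail ``for a general superscheme'' is true but understates the problem: it already fails on the split superscheme $\Pn^{1|1}$, so the split structure alone cannot deliver your lift; the cohomological hypothesis is what does.
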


\begin{proof}
    Fix the integers $n \geq 2$ and $m \geq 1$. By   Theorem \ref{reduced=super} we have 
\begin{equation}\label{vanishing0}
    H^i_*(\mathbb{P}^{n}, \mathcal{E}) = H^i_*(\mathbb{P}^{n}, \mathcal{E}_{red}) = 0
\end{equation}
 for every $1 \leq i \leq n-1$. Now, there exists  two vector bundles $E_0, E_1$ on $\Pn^n$, of rank $p$ and $q$ respectively, such that $\calE_{red} = E_0 \oplus \Pi E_1$. By   equation   \eqref{vanishing0} we have 
 $$H^i_*(\mathbb{P}^{n}, E_0) = H^i_*(\mathbb{P}^{n}, E_1) =  0.$$
By  Horrocks' splitting criterion (Theorem \ref{Horrocks}) we have that  $E_0$ and $E_1$ split as sums of  line bundles; this implies that:
\[\mathcal{E}_{red}  = \left[ \displaystyle \bigoplus^{p}_i \mathcal{O}_{\Pn^{n}}(a_i) \right]\oplus\left[\bigoplus^{q}_i \Pi \mathcal{O}_{\Pn^{n}}(b_i)\right]. \]
Now define
$$\mathcal{F}  = \left[  \displaystyle \bigoplus^{p}_i \mathcal{O}_{\Pn^{n|m}}(a_i)\right]\oplus\left[ \bigoplus^{q}_i \Pi \mathcal{O}_{\Pn^{n|m}}(b_i)\right].$$
We are going to prove that there exists an isomorphism $\phi: \mathcal{F} \to \mathcal{E}$. By our construction  there exists an isomorphism $\varphi: \mathcal{F}_{red} \to \mathcal{E}_{red}$, since both can be written as $\oplus^{p}_i \mathcal{O}_{\Pn^{n}}(a_i) \oplus^{q}_i \Pi \mathcal{O}_{\Pn^{n}}(b_i)$. Note also that  
    $$\varphi \in Hom(\mathcal{F}_{red}, \mathcal{E}_{red}) = H^0(\mathbb{P}^n, \mathcal{E}_{red}\otimes \mathcal{F}_{red}^{\vee}) = H^0(\mathbb{P}^n, (\mathcal{E}\tens{\calO_{\Pn^{n|m}}} \mathcal{F}^{\vee})_{red}), $$
where the last isomorphism follows from Lemma \ref{commutesreduced}b).  From the fundamental exact sequence, for $\calE \otimes \calF^{\vee}$ we shall have: 
$$0 \to \calJ(\calE \otimes \calF^{\vee}) \to   \calE \otimes \calF^{\vee} \to (\calE \otimes \calF^{\vee})_{red} \to 0.$$
From the long exact sequence of cohomology, we see that the obstruction to the lifting of the isomorphism $\varphi \in H^0(\mathbb{P}^n, (\mathcal{E} \otimes \mathcal{F}^{\vee})_{red})$ lies in the group $H^1(\Pn^{n}, \calJ(\calE \otimes \calF^{\vee}))$.  By equation
\eqref{superreduced}  we have 
$$H^1(\Pn^{n}, \calJ(\calE \otimes \calF^{\vee})) = H^1(\Pn^{n}, p_*(\calJ(\calE \otimes \calF^{\vee}))).$$
By   Theorem \ref{reduced=super}  it is enough to prove that $H^1(\Pn^{n}, \calE \otimes \calF^{\vee}) = 0$, as $H^1(\Pn^{n}, p_*(\calJ(\calE \otimes \calF^{\vee})))$ is a direct summand of it. But since
    $$\calE \otimes \calF^{\vee} = \left[\bigoplus_{i=1}^p\calE(-a_i)\right]\oplus\left[\bigoplus_{i=1}^q \Pi\calE(-b_i)\right], $$
we have that 
 \begin{equation}\label{tecnico}
      H^1(\Pn^{n}, \calE \otimes \calF^{\vee}) = \left[\bigoplus_{i=1}^p   H^1(\Pn^{n}, \calE(-a_i))\right]\oplus\left[ \bigoplus_{i=1}^q H^1(\Pn^{n}, \calE(-b_i))\right],
  \end{equation}
 and the left-hand side vanishes by hypothesis. It then follows that the right-hand side of the equation \eqref{tecnico} vanishes, which implies that $H^1(\Pn^{n|m}, \calJ(\calE \otimes \calF^{\vee})) = 0$, and thus the isomorphism $\varphi \in Hom(\mathcal{F}_{red}, \mathcal{E}_{red})$ lifts to a morphism $\phi \in \Hom_{\calO_{\Pn^{n|m}}}(\mathcal{F}, \mathcal{E})$. 
  
Since $\tilde{\phi} = \varphi$, the result   follows by the Theorem \ref{bosonicequivalence}.
\end{proof}

We draw the reader's attention to the fact that equation \eqref{superreduced} and Theorem \ref{reduced=super} are crucial to the proof of this result, and for their proofs we relied heavily on the split structure of the superscheme. Thus generalizing this result for other superschemes such as supergrassmannians as in \cite{O89}, would require further adaptations of the induction process.

\bigskip


\begin{thebibliography}{10}

\bibitem{AG1998}
{\sc E.~Arrondo and B.~Gra\~na}, {\em Vector bundles on {$G(1,4)$} without
  intermediate cohomology}, J. Algebra, 214 (1999), pp.~128--142.

\bibitem{BBH2012}
{\sc C.~Bartocci, U.~Bruzzo, and D.~Hern{\'a}ndez-Ruip{\'e}rez}, {\em The
  geometry of supermanifolds}, vol.~71, Springer Science \& Business Media,
  2012.

\bibitem{BHR2021}
{\sc U.~Bruzzo and D.~Hern\'andez~Ruip\'erez}, {\em The supermoduli of {SUSY}
  curves with {R}amond punctures}, Rev. R. Acad. Cienc. Exactas F\'is. Nat.
  Ser. A Mat. RACSAM, 115 (2021), pp.~Paper No. 144, 33.

\bibitem{biblia}
{\sc U.~Bruzzo, D.~Hern{\'a}ndez~Ruip{\'e}rez, and A.~Polishchuk}, {\em Notes
  on fundamental algebraic supergeometry. {H}ilbert and {P}icard superschemes},
  Advances in Mathematics, 415 (2023), p.~108890.

\bibitem{CN2018}
{\sc S.~L. Cacciatori and S.~Noja}, {\em Projective superspaces in practice},
  Journal of Geometry and Physics, 130 (2018), pp.~40--62.

\bibitem{CMR2005}
{\sc L.~Costa and R.~M. Mir{\'o}-Roig}, {\em Cohomological characterization of
  vector bundles on multiprojective spaces}, Journal of Algebra, 294 (2005),
  pp.~73--96.

\bibitem{DW2013}
{\sc R.~Donagi and E.~Witten}, {\em Super {A}tiyah classes and obstructions to
  splitting of supermoduli space}, Pure and Applied Mathematics Quarterly, 9
  (2013).

\bibitem{H1964}
{\sc G.~Horrocks}, {\em Vector bundles on the punctured spectrum of a local
  ring}, Proceedings of the London Mathematical Society, 3 (1964),
  pp.~689--713.

\bibitem{K2021}
{\sc M.~Kapranov}, {\em Supergeometry in mathematics and physics}, New Spaces
  in Physics--Formal and Conceptual Reflections, Cambridge University Press,
  Cambridge,  (2021), pp.~114--152.

\bibitem{Kuz2004}
{\sc A.~Kuznetsov}, {\em Derived categories of cubic and {V}14 threefolds},
  Proceedings of the Steklov Institute of Mathematics-Interperiodica
  Translation, 246 (2004), pp.~171--194.

\bibitem{M2013}
{\sc Y.~I. Manin}, {\em Gauge field theory and complex geometry}, vol.~289,
  Springer Science \& Business Media, 2013.

\bibitem{Noja2018}
{\sc S.~Noja}, {\em Supergeometry of $\pi$-projective spaces}, Journal of
  Geometry and Physics, 124 (2018), pp.~286--299.

\bibitem{Noja22}
{\sc S.~Noja and R.~Re}, {\em A note on super {K}oszul complex and the
  {B}erezinian}, Annali di Matematica Pura ed Applicata (1923-), 201 (2022),
  pp.~403--421.

\bibitem{OP84}
{\sc O.~V. Ogievetskii and I.~B. Penkov}, {\em Serre duality for projective
  supermanifolds}, Functional analysis and its applications, 18 (1984),
  pp.~68--70.

\bibitem{O89}
{\sc G.~Ottaviani}, {\em Some extensions of {H}orrocks criterion to vector
  bundles on {G}rassmannians and quadrics}, Annali di Matematica pura ed
  applicata, 155 (1989), pp.~317--341.

\bibitem{Penkov2022}
{\sc I.~Penkov and C.~Hoyt}, {\em Classical Lie algebras at infinity},
  Springer, 2022.

\bibitem{GS2024}
{\sc D.~S{\'a}nchez~G{\'o}mez and F.~Sancho~de Salas}, {\em On {K}oszul complex
  of a supermodule}, arXiv preprint arXiv:2401.14790,  (2024).

\end{thebibliography}
\end{document}